\newtheorem{conj}{Conjecture}[section]
\newtheorem{theorem}[conj]{Theorem}
\newtheorem{remark}[conj]{Remark}
\newtheorem{lemma}[conj]{Lemma}
\newtheorem{corollary}[conj]{Corollary}
\newtheorem{cor}[conj]{Corollary}
\newcommand\independent{\protect\mathpalette{\protect\independent}{\perp}} 
\def\independent#1#2{\mathrel{\rlap{$#1#2$}\mkern2mu{#1#2}}}
\newcommand{\Conv}{\mathrm{Conv}}
\newcommand{\R}{\mathbb{R}}
\renewcommand{\P}{\mathbb{P}}
\newcommand{\E}{\mathbb{E}}
\newcommand{\Z}{\mathcal{Z}}
\renewcommand{\Z}{\mathbb{Z}}
\newcommand{\N}{\mathbb{N}}
\date{}
\author{Abdulmajeed Alqasem$^a$, Heshan Aravinda$^a$, Arnaud Marsiglietti$^{a,*}$, James Melbourne$^b$}
\title{On a Conjecture of Feige for Discrete Log-Concave Distributions}
\begin{document}

\maketitle

\vskip5mm
\noindent
$^a$ University of Florida, Department of Mathematics, P.O. Box 118105, Gainesville, FL 32611, USA \\
$^b$ Centro de Investigaci\'on en Matem\'aticas (CIMAT), Probabilidad y Estad\'istica, Guanajuato, Gto 36023, M\'exico \\
$^*$Corresponding author, a.marsiglietti@ufl.edu

\vskip1cm

\begin{abstract}

A remarkable conjecture of Feige (2006) asserts that for any collection of $n$ independent non-negative random variables $X_1, X_2, \dots, X_n$, each with expectation at most $1$, 
$$ \P(X < \E[X] + 1) \geq \frac{1}{e}, $$
where $X = \sum_{i=1}^n X_i$. In this paper, we investigate this conjecture for the class of discrete log-concave probability distributions and we prove a strengthened version. More specifically, we show that the conjectured bound $1/e$ holds when $X_i$'s are independent discrete log-concave with arbitrary expectation.
    
\end{abstract}

\vskip5mm
\noindent
{\bf Keywords:} Feige's conjecture, Sum of random variables, Small deviations, Log-concave.


\section{Introduction}

Motivated by the problem of estimating the average degree of a graph, Feige \cite{FEI} investigated the probabilistic quantity, $\mathbb{P}(X< \mathbb{E}[X]+ 1)$, where $X$ is the sum of $n$ independent non-negative random variables $X_1, X_2, \dots, X_n,$ with $\mathbb{E}[X_i] \leq 1$ for each $i$. Classical inequalities such as Markov's and Chebyshev's inequalities yield no useful information about this probability. Chebyshev's inequality does not play a role in this case since there is no assumption on the variance of $X$, while Markov's inequality implies $\mathbb{P}(X< \mathbb{E}[X]+ 1)\geq \frac{1}{\mathbb{E}[X]+1},$ which is essentially useless when $n$ is large. Using an approach based on a case analysis, Feige managed to prove that
\begin{equation}\label{Feige-bound}
        \mathbb{P}(X< \mathbb{E}[X] + 1 )\geq \frac{1}{13}.
\end{equation}
However, as Feige noted, one may take a collection of $n$ number of i.i.d. random variables such that for each $i, X_i = n+1$ with probability $\frac{1}{n+1}$ and $X_i=0,$ otherwise. Then, $$\mathbb{P}(X< \mathbb{E}[X] + 1 ) = \left(1- \frac{1}{n+1}\right)^n.$$  Based on this, it was conjectured that one could replace $1/13$ with $1/e.$ The improvement on inequality \eqref{Feige-bound} was first made by He, Zhang and Zhang (see \cite{HZZ}) by establishing the lower bound $1/8$. Later, Garnett (see \cite{GB}) improved the lower bound to $7/50$. The current best bound in this direction is $0.1798$ by Guo, He, Ling and Liu (see \cite{GHLL}).

Feige's inequality has many applications in computer science and combinatorics including the weighted max-cut problem (see \cite{HZZ}), approximating the average degree of a graph in sublinear-time (see \cite{FEI}, \cite{GRS}), and the connection of a  conjecture of Manickam, Miklós, and Singhi with matchings and fractional
covers of hypergraphs (see \cite{AHS}). We refer the reader to \cite{ABPY}, \cite{FK} and \cite{V} for more applications.

Our goal in this article is to prove that the conjectured lower bound holds when the collection $\{X_i\}_{i=1}^n$ is independent discrete log-concave. Recall that an integer-valued random variable $X$ is said to be log-concave if its probability mass function $p$ satisfies
\begin{equation*}
    p(k)^2 \geq p(k-1)\,p(k+1),
\end{equation*} for all $k \in \mathbb{Z}$ and $X$ has contiguous support. For example, Bernoulli, discrete uniform, binomial, negative binomial, geometric, hypergeometric and Poisson distributions are all log-concave. Many sequences in combinatorics are log-concave (or
conjectured to be log-concave), see, e.g., \cite{Mas}, \cite{SI}, \cite{S}, \cite{PB}, \cite{ALGV}. The surveys \cite{SII} and \cite{BR} provide more details about the notion of log-concavity in the context of combinatorics. Recent developments on discrete log-concavity in probabilistic setting include log-Sobolev type inequalities \cite{J}, discrete R\'{e}nyi entropy inequalities \cite{MT}, \cite{MMR}, \cite{MP}, concentration bounds and moments inequalities \cite{AMM}, \cite{MM2}. 

Our main result is as follows.

\begin{theorem}\label{feige}

Let $X$ be a discrete log-concave random variable. Then, \begin{equation}\label{feige-ineq}
    \mathbb{P}(X< \mathbb{E}[X] + 1) \geq e^{-1}.
\end{equation}

\end{theorem}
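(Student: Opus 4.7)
The plan is to reduce the inequality to an extremal problem for a one-parameter family of discrete log-concave distributions and then establish the bound by comparison. Since $X$ is integer-valued, the event $\{X < \mathbb{E}[X]+1\}$ coincides with $\{X \leq N\}$, where $N := \lceil \mathbb{E}[X] \rceil$. Translating $X$ by $-N$, I may assume $N = 0$, so that $\mu := \mathbb{E}[X] \in (-1,0]$ and the goal becomes $\mathbb{P}(X \leq 0) \geq 1/e$.

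The extremal candidates are parametrized by $K \in \mathbb{N}$: set $s_K := K/(K+1)$ and consider the log-concave PMF
\[
p_K(k) = \frac{1}{K+1}\left(\frac{K}{K+1}\right)^{K-k}, \qquad k \leq K,
\]
with $p_K(k) = 0$ for $k > K$. A direct computation shows that $p_K$ is log-concave (the ratios $p_K(k+1)/p_K(k) = 1/s_K$ for $k<K$ are constant, then drop to $0$), that $\mathbb{E}_{p_K}[X] = K - s_K/(1-s_K) = 0$, and that $\mathbb{P}_{p_K}(X \leq 0) = s_K^K = (1 - 1/(K+1))^K$. Since this sequence decreases monotonically to $1/e$, the family saturates the asserted bound, and no universal constant strictly larger than $1/e$ can be valid.

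The heart of the proof is the comparison inequality: for any log-concave $p$ on $\mathbb{Z}$ with $\mu \leq 0$, $\mathbb{P}(X \leq 0) \geq 1/e$. Let $r_k := p(k+1)/p(k)$, which is non-increasing by log-concavity, and let $M$ denote a mode of $X$. If $M \leq 0$, the right tail is geometrically dominated, $p(k) \leq p(0)\, r_0^k$ for $k \geq 0$, and the bound $\mathbb{P}(X \geq 1) \leq p(0)\, r_0/(1-r_0)$, combined with the constraint $\mu \leq 0$ on the left tail, yields the inequality directly in this subcase. If instead $M \geq 1$, the mean constraint forces a heavy left tail whose decay rate is controlled by the non-increasing ratios; here one compares $p$ to the extremal $p_M$ via a mass-transport or Lagrangian argument, showing that any local perturbation of $p$ toward $p_M$ that preserves log-concavity and keeps $\mu \leq 0$ only decreases $\mathbb{P}(X \leq 0)$.

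The main obstacle lies in the case $M \geq 1$, which is the sharp regime. Executing the comparison rigorously requires handling the infinite-dimensional convex body of log-concave PMFs with fixed mean: one must show that extremality forces the log-concavity inequalities $p(k)^2 \geq p(k-1)p(k+1)$ to be equalities on the interior of the support, identifying $\log p$ as affine and thus $p = p_K$ for some $K$. Once this is established, the theorem follows from the elementary inequality $(1 - 1/(K+1))^K \geq 1/e$, valid for every $K \geq 1$.
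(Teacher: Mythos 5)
Your proposal correctly identifies the shape of the problem (reduce to geometric-type extremizers and then check an elementary limit) and the sharp family, but it stops short of a proof at exactly the point where the real work is required. Two specific gaps:

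\textbf{The extremal characterization is asserted, not proved.} You write that one must ``show that extremality forces the log-concavity inequalities to be equalities on the interior of the support, identifying $\log p$ as affine,'' and then move on as if this were done. This is the crux of the whole argument, and it is genuinely nontrivial: one is optimizing a linear functional over an infinite-dimensional set cut out by infinitely many nonconvex constraints (the log-concavity inequalities) plus a linear constraint on the mean. The paper handles this by first truncating to a finite window $\{M,\dots,N\}$ (an approximation step you skip, but which matters for Krein--Milman to apply), and then invoking a prior theorem of Marsiglietti--Melbourne characterizing the extreme points of $\mathrm{Conv}(\mathcal{P}_h(\{M,\dots,N\}))$ as truncated log-affine sequences $p(k)=Cp^k\mathbf{1}_{\{m,\dots,n\}}(k)$. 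Without that theorem (or a substitute localization lemma), the ``mass-transport or Lagrangian argument'' you gesture at for the case $M\geq 1$ is a placeholder, not a step.

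\textbf{The extremal family and the endgame are oversimplified.} Even granting the extreme-point reduction, the extremizers are a two-parameter family: a geometric ratio $p>0$ \emph{and} a finite support length $n$. Your $p_K$ is the one-parameter subfamily obtained by letting the left endpoint go to $-\infty$ and tuning $p$ so that $\mu=0$. The reduction to that subfamily is precisely what needs to be shown, and the paper does it with a monotonicity-in-$n$ argument (differentiating $t\mapsto t(1-x^{1/t})$) followed by a calculus analysis of the resulting limit function $g(x)$, proving $g'\leq 0$ via a chain of auxiliary functions. The claim that ``once this is established, the theorem follows from $(1-1/(K+1))^K\geq 1/e$'' compresses this into a single line that is not available without the intermediate reductions. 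Relatedly, the $p=1$ (uniform) extremizer and the rounding $\lceil\mathbb{E}[X]\rceil\geq\mathbb{E}[X]$ need separate handling, which the paper does explicitly. Finally, in the case $M\leq 0$ you say the geometric tail bound ``combined with the constraint $\mu\leq 0$ \dots\ yields the inequality directly'' --- that derivation is not given, and while plausible, it is not obvious; it would need to be spelled out to count as a proof.

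In short: the outline is compatible with the truth and with the paper's strategy, but the proposal defers the extremal lemma and the quantitative optimization, which together constitute essentially the entire proof.
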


The lower bound $e^{-1}$ is sharp among the class of discrete log-concave random variables as can be seen by taking a random variable with probability mass function $p(k) = C n^{k/n}$, $k \in \{1, \dots, n\}$, with $n$ sufficiently large, where $C$ is the normalizing constant; the details are carried out in Section \ref{sharpness}.

Since the class of log-concave distributions is stable under reflection, Theorem \ref{feige} yields analogous bounds for upper tails. Namely, for any discrete log-concave random variable $X$,
$$ \P(X > \E[X] - 1) \geq e^{-1}. $$

Our main result is of independent interest, as it provides a sharp small deviation inequality on the class of log-concave distributions, a rich family that, in addition to containing the well known and studied distributions mentioned above, includes many interesting sequences for which explicit expressions are intractable. For instance, the intrinsic volumes of a convex body are log-concave by the Alexandrov-Fenchel inequality \cite{M91}, and hence by Theorem \ref{feige} we have the following immediate corollary. We refer to \cite{ALMT}, \cite{LMNPT} for further information on intrinsic volumes random variables and its importance in statistical learning.


\begin{corollary}

The central intrinsic volume of the intrinsic volume random variable associated with a convex body plus 1 is no smaller than its $1/e$ quantile.

\end{corollary}

Similarly, by the resolution of the strong Mason conjecture \cite{AHK} the number of independent sets of size $k$ in a matroid is log-concave. Therefore, Theorem \ref{feige} immediately implies the following corollary.

\begin{corollary}

The proportion of sub-forests of size smaller than the average forest size plus 1 is at least $1/e$.

\end{corollary}

Since the sum of independent discrete log-concave random variables is log-concave (see \cite{Fe}), Theorem \ref{feige} can be extended to $X = \sum_{i=1}^n X_i,$ where $X_i$'s are independent discrete log-concave. This proves a stronger version of Feige's conjecture as the optimal lower bound holds without any assumption on expectation and on the sign of the random variables. Note that this cannot be true for distributions that are not log-concave in general, this can be seen by taking, for example, a random variable $X$ such that $\P(X=0) = 1-p$, $\P(X=m) = p$, with $m$ sufficiently large, and $p$ sufficiently close to 1. Thus, in general, the constraint on the expectation is necessary.

Inequality \eqref{feige-ineq} has been established for binomial and the sum of independent Bernoulli random variables by Garnett \cite{GBT}. Theorem \ref{feige} extends Garnett's result to the whole class of log-concave probability sequences. The following stronger inequality has been proved for Poisson distribution by Teicher \cite{HT},
\begin{equation*}
    \mathbb{P}(X \leq \mathbb{E}[X])> e^{-1}.
\end{equation*}
However, Teicher's inequality does not hold for all log-concave random variables. This can be seen by taking a truncated Poisson distribution with the parameter equals to $5$, supported on $\{0,1,2\}$ so that $\P(X\leq \mathbb{E}[X])< e^{-1}.$

In the special case where $\mathbb{E}[X] \in \mathbb{Z}$, Theorem \ref{feige} yields the following result.

\begin{cor}\label{GreenMoh}

Let $X$ be a discrete log-concave random variable. If $\mathbb{E}[X] \in \mathbb{Z},$ then
\begin{equation}\label{integer}
    \mathbb{P}(X \leq \mathbb{E}[X]) \geq e^{-1}.
\end{equation}

\end{cor}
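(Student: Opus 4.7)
The plan is very short because this corollary is essentially an immediate consequence of Theorem \ref{feige} combined with the fact that discrete log-concave random variables are integer-valued by definition. First I would recall from the definition given in the paper that a discrete log-concave random variable $X$ is supported on a contiguous set of integers, so $X$ takes values in $\mathbb{Z}$ almost surely.

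Next I would observe the key (and only) step: under the hypothesis $\mathbb{E}[X] \in \mathbb{Z}$, the real number $\mathbb{E}[X]+1$ is also an integer, and for any integer-valued random variable the events
\begin{equation*}
\{X < \mathbb{E}[X]+1\} \quad \text{and} \quad \{X \leq \mathbb{E}[X]\}
\end{equation*}
coincide. Consequently,
\begin{equation*}
\mathbb{P}(X \leq \mathbb{E}[X]) = \mathbb{P}(X < \mathbb{E}[X] + 1),
\end{equation*}
and applying Theorem \ref{feige} to the right-hand side yields $\mathbb{P}(X \leq \mathbb{E}[X]) \geq e^{-1}$, proving \eqref{integer}.

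There is no real obstacle here: the corollary simply exploits the rigidity of the integer lattice to absorb the $+1$ shift from \eqref{feige-ineq} into a non-strict inequality. The only thing worth being careful about is emphasizing that log-concavity in the discrete sense of this paper forces integer support (as opposed, say, to a continuous log-concave density), since this is what makes the equivalence of the two events hold.
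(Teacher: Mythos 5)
Your proof is correct and is exactly the (implicit) argument the paper intends: since $X$ is integer-valued and $\E[X] \in \Z$, the events $\{X < \E[X]+1\}$ and $\{X \le \E[X]\}$ coincide, so the corollary follows at once from Theorem \ref{feige}. The paper does not spell this out, but your reasoning matches the intended deduction precisely.
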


For specific distributions, the lower bound in corollary \ref{GreenMoh} can be improved to $1/2$ (when $\E[X]$ is an integer), as shown for the sum of independent Bernoulli (see \cite{AS}), Poisson (see \cite{HT}), and can easily be verified for the discrete uniform distribution. However, this is not true for the whole class of log-concave distributions, as can be seen by taking a random variable supported on $\{1, \dots, 8\}$ with probability mass function $p(k) = Cp^k$, $k \in \{1, \dots, 8\}$, where $p$ is chosen so that $\E[X] = 6$ and $C$ is the normalizing constant. For this distribution, $\P(X \leq \E[X]) < \frac{1}{2}$.

In the continuous setting, a result of Gr\"unbaum \cite{Gr} (see also \cite[Lemma 2.4]{MPa}) states that if $X$ is a continuous log-concave random variable then the following stronger inequality holds,
\begin{equation}\label{grunbaum}
\P(X \leq \E[X]) \geq \frac{1}{e}.
\end{equation}
Recall that a real-valued random variable is log-concave if it has a probability density function $f$ (with respect to Lebesgue measure) satisfying
$$ f((1-\lambda) x + \lambda y) \geq f(x)^{1-\lambda} f(y)^{\lambda}, $$
for all $x,y \in \R$ and $\lambda \in [0,1]$. Examples include Gaussian, exponential, and uniform over an interval. See, e.g., \cite{KM}, \cite{SW}, \cite{C}, for properties and applications of continuous log-concave distributions. Since the sum of independent continuous log-concave random variables is log-concave (see \cite{Pr}), inequality \eqref{grunbaum} applies to $X = \sum_{i=1}^n X_i,$ where $X_i$'s are independent continuous log-concave, in particular Feige's conjecture holds for this class of random variables.

Let us note that Theorem \ref{feige} implies inequality \eqref{grunbaum} for continuous log-concave random variables. Indeed, suppose that $X$ is log concave with support $[0,1]$ and density function $f$. For integer $n \geq 1$, define $X_n$ to be the discrete log-concave random variable on $\{1, \dots, n\}$ with probability mass function
$$ \P(X_n = k) = \frac{f(k/n)}{\sum_{j=1}^n f(j/n)}. $$
One can check that $X_n/n$ converges to $X$ in distribution, and together with Theorem \ref{feige} it follows that
$$ \P(X \leq \E[X]) = \lim_{n \to +\infty} \P \left( \frac{X_n}{n} \leq \frac{\E[X_n]}{n} + \frac{1}{n} \right) = \lim_{n \to +\infty} \P(X_n \leq \E[X_n] + 1) \geq \frac{1}{e}. $$
The proof of the result for general $X$ follows by approximation on compact sets and rescaling.

The article is organized as follows. In section \ref{proof}, we prove Theorem \ref{feige}. In section \ref{sharpness}, we comment on the sharpness of our main result.

\section{Proof of Theorem \ref{feige}}\label{proof}

The key idea of the proof of Theorem \ref{feige} is to reduce the problem to truncated geometric distributions. This is due to the identification of the extreme points of the convex hull of a subset of discrete log-concave probability distributions satisfying a linear constraint and the standard use of Krein-Milman's theorem, developed by the third and fourth named authors in \cite{MM} (see also \cite{NS}). This method can be seen as a discrete analogue of a localization technique due to Kannan, Lov\'asz and Simonovits \cite{KLS} in the form of Fradelizi and Gu\'edon \cite{FG}. For completeness, we recall the main argument from \cite{MM}.

Let $M, N \in \Z$. Denote by $\mathcal{P}(\{M, \dots, N\})$ the set of all discrete probability measures supported on $\{M, \dots, N\}$. Let $h \colon \{M, \dots, N\} \to \mathbb{R}$ be an arbitrary function. Consider $\mathcal{P}_h(\{M, \dots, N\})$ the set of all log-concave distributions $\P_X$ in $\mathcal{P}(\{M, \dots, N\})$ satisfying the constraint $\E[h(X)] \geq 0$, that is,
$$ \mathcal{P}_h(\{M, \dots, N\}) = \{ \P_X \in \mathcal{P}(\{M, \dots, N\}) : X \mbox{ log-concave, } \, \E[h(X)] \geq 0 \}. $$
The following theorem describes the shape of the extreme points of $\Conv(\mathcal{P}_h(\{M, \dots, N\}))$.

\begin{theorem}[\cite{MM}]\label{extreme}

 If $\P_X \in \Conv(\mathcal{P}_h(\{M, \dots, N\}))$ is an extreme point, then its probability mass function $p$ satisfies
    \begin{equation}\label{extremizers}
        p(k) = C p^k 1_{\{m, \dots, n\}}(k),
    \end{equation}
    for some $C, p >0$, $m,n \in \{M, \dots, N\}$.

\end{theorem}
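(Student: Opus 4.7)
The plan is two-part: first, show that every extreme point of $\Conv(\mathcal{P}_h(\{M, \dots, N\}))$ actually lies in $\mathcal{P}_h(\{M, \dots, N\})$ itself; second, show that any $\P_X \in \mathcal{P}_h$ whose mass function is not of the form $Cr^k 1_{\{m, \dots, n\}}(k)$ admits a nontrivial symmetric decomposition $p = (q_+ + q_-)/2$ with $q_\pm \in \mathcal{P}_h$ and $q_+ \neq q_-$, thereby ruling out extremality.

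For the first part, I would verify that $\mathcal{P}_h(\{M, \dots, N\})$ is a closed, hence compact, subset of the simplex of probability measures on $\{M, \dots, N\}$. Each log-concavity inequality $p(k)^2 \geq p(k-1)p(k+1)$ and the moment inequality $\sum_k h(k) p(k) \geq 0$ are closed conditions; contiguity of support is automatic, since $p(k) = 0$ with $p(k-1), p(k+1) > 0$ would violate log-concavity. In finite dimensions, $\Conv(\mathcal{P}_h)$ is then also compact, and Milman's partial converse to the Krein-Milman theorem places its extreme points inside $\mathcal{P}_h$.

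For the second part, fix $p \in \mathcal{P}_h$ with support $\{m, \dots, n\}$ and assume $\log p$ is concave but not affine on this support, so that the set of interior tight points
$$ A = \{k \in \{m+1, \dots, n-1\} : p(k)^2 = p(k-1) p(k+1) \} $$
satisfies $|A| \leq n - m - 2$. I seek a function $g : \{m, \dots, n\} \to \R$ subject to: (a) $\sum_k p(k) g(k) = 0$, so that the multiplicative perturbations $q_\pm(k) = p(k)(1 \pm \eps g(k))$ are probability measures; (b) $g(k-1) - 2 g(k) + g(k+1) = 0$ for every $k \in A$; and (c) if $\E_p[h] = 0$, additionally $\sum_k h(k) p(k) g(k) = 0$. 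These amount to at most $|A| + 2 \leq n - m$ linear equations in the $(n - m + 1)$-dimensional space of $g$'s, so a nonzero solution exists.

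A direct expansion together with (b) yields, for $k \in A$,
$$ q_\pm(k)^2 - q_\pm(k-1) q_\pm(k+1) = p(k)^2 \eps^2 \left( \frac{g(k-1) - g(k+1)}{2} \right)^{\!2} \geq 0, $$
so log-concavity is preserved at every tight point for every $\eps$ and either sign. At strict points $k \notin A$, continuity in $\eps$ keeps the log-concavity inequality strict for small $\eps$; and if $\E_p[h] > 0$, the moment constraint is preserved by continuity without needing (c). For sufficiently small $\eps > 0$, $q_\pm$ are nonnegative with support exactly $\{m, \dots, n\}$, so $q_\pm \in \mathcal{P}_h$; since $g \not\equiv 0$ and $p > 0$ on its support, $q_+ \neq q_-$, contradicting extremality of $p$. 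The hard part is this perturbation construction: the quadratic identity above is precisely what makes the \emph{affineness} requirement on $g$ along $A$ the right condition to permit perturbation in both signs simultaneously, and the dimension count ensures such a $g$ is available whenever $p$ fails to be truncated geometric.
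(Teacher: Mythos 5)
The paper does not supply its own proof of Theorem~\ref{extreme}; it is imported from \cite{MM} (with a pointer to \cite{NS}), so there is no internal argument to compare against. Evaluating your proposal on its own terms: it is a correct and complete implementation of the standard localization/perturbation argument (going back to Fradelizi--Gu\'edon \cite{FG} in the continuous $s$-concave setting), and the key algebraic identity
\[
q_\pm(k)^2 - q_\pm(k-1)q_\pm(k+1) = p(k)^2\,\eps^2\Bigl(\tfrac{g(k-1)-g(k+1)}{2}\Bigr)^2
\]
at tight points is precisely what makes a two-sided multiplicative perturbation viable; your dimension count ($|A|+2 \le n-m < n-m+1$ unknowns) is the right bookkeeping. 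This is, in spirit, the same strategy that \cite{MM} uses.

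One small but real slip in the first part: you claim contiguity of support is ``automatic'' because $p(k)=0$ with $p(k-1),p(k+1)>0$ would violate the three-term inequality. That rules out gaps of width one only; a sequence such as $p>0$ on $\{1,2\}\cup\{5,6\}$ with $p(3)=p(4)=0$ satisfies $p(k)^2\ge p(k-1)p(k+1)$ for every $k$. So contiguity is a genuinely separate hypothesis, not a consequence of the three-term inequality. The conclusion you want (closedness of $\mathcal{P}_h$) is still true, but the correct reason is the discrete-Jensen consequence of log-concavity on a contiguous support: if $p_n\to p$ with each $p_n$ log-concave and contiguous, and $a<j<b$ with $p(a),p(b)>0$, then $p_n(j)\ge p_n(a)^{(b-j)/(b-a)}p_n(b)^{(j-a)/(b-a)}$, whose right-hand side has a positive limit; hence $p(j)>0$ and the limiting support cannot have an interior gap. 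With that repair, step~1 (Milman's partial converse in finite dimension) and step~2 (the perturbation) fit together cleanly, and the contrapositive you prove --- any $p\in\mathcal{P}_h$ that is not log-affine on its support decomposes as $\tfrac12(q_++q_-)$ with $q_\pm\in\mathcal{P}_h$ distinct --- gives exactly the stated conclusion.
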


One can therefore deduce by the (finite-dimensional version of the) Krein-Milman theorem that the supremum of any convex functional over the set $\mathcal{P}_h(\{M, \dots, N\})$ is attained at probability distributions of the form \eqref{extremizers} (see \cite{MM}). 

The convex (linear) functional that will be considered in the proof of Theorem \ref{feige} is of the form $\Phi \colon \P_X \mapsto \P_X(A)$ for a fixed Borel set $A \subset \R$.

The next lemma shows that in order to prove Theorem \ref{feige} it is enough to consider compactly supported log-concave random variables.

\begin{lemma}\label{approx}

If the inequality
\begin{equation}\label{to-prove}
\P(X < \E[X] + 1) \geq \frac{1}{e}
\end{equation}
holds for all compactly supported log-concave random variable $X$, then the inequality holds for all log-concave random variables on $\Z$.

\end{lemma}

\begin{proof}
By assumption, inequality \eqref{to-prove} holds for all compactly supported log-concave random variables. It remains to prove the inequality for log-concave random variables either with full support on $\Z$, lower bounded, or upper-bounded.

Assume first that $X$ is a log-concave random variable with support $\{1,2,\dots\}$ and probability mass function $p$. For $n \geq 1$, define a log-concave random variable $X_n$ with probability mass function
$$ p_n(k) = \frac{p(k)}{\sum_{j=1}^n p(j)}, \quad k \in \{1, \dots, n\}. $$
Note that for all $n \geq 1$,
\begin{eqnarray*}
\E[X_n] \leq \E[X] \Longleftrightarrow  \sum_{k=1}^n k p(k) \sum_{k \geq n+1} p(k) \leq \sum_{k=1}^n  p(k)  \sum_{k \geq n+1} k p(k),
\end{eqnarray*}
which is true since
$$ \sum_{k=1}^n k p(k) \sum_{k \geq n+1} p(k) \leq (n+1) \sum_{k=1}^n p(k) \sum_{k \geq n+1} p(k) \leq \sum_{k=1}^n  p(k)  \sum_{k \geq n+1} k p(k). $$
Therefore, for all $n \geq 1$,
$$ \P(X_n < \E[X] + 1) \geq \P(X_n < \E[X_n] + 1) \geq \frac{1}{e}, $$
where the last inequality comes from the assumption, since $X_{n}$ is compactly supported. We conclude by letting $n$ goes to $+\infty$ as
$$ \P(X_n < \E[X] + 1) \underset{n \to +\infty}{\longrightarrow} \P(X < \E[X]+1). $$
Since inequality \eqref{to-prove} is translation invariant, we deduce that the inequality holds for all lower bounded log-concave random variables.

Assume now that $X$ is a log-concave random variable with support $\{\dots, -2, -1\}$ and probability mass function $p$. Define, for $i > 1/p(-1)$, a log-concave random variable $X_i$ with probability mass function
$$ p_i(k) = C_i p(k), \quad k \in \{\dots, -3, -2\}, \quad \mbox{and} \quad p_i(-1) = C_i \left( p(-1) - \frac{1}{i} \right), $$
where $C_i = (1-\frac{1}{i})^{-1}$ is the normalizing constant. Note that for all $i > 1$,
$$ \E[X_i] = \frac{i}{i-1} \left( \sum_{k \leq -2} k p(k) - p(-1) + \frac{1}{i} \right) = \frac{i}{i-1} \left( \E[X] + \frac{1}{i} \right) < \E[X], $$
where the last inequality follows from $\E[X] < -1$. For $i > 1/p(-1)$ and $m \geq 1$, define a compactly supported log-concave random variable $X_{i,m}$ with probability mass function
$$ p_{i,m}(k) = \frac{p_i(k)}{\sum_{j =-m}^{-1} p_i(j)}, \quad k \in \{-m, \dots, -1\}. $$
Note that $\E[X_{i,m}] \to \E[X_i]$ as $m \to +\infty$, therefore, since $\E[X_i] < \E[X]$, there exists $m_0 \in \N$ such that for all $m \geq m_0$, $\E[X_{i,m}] \leq \E[X]$. Hence, as above,
for $m \geq m_0$,
$$ \P(X_{i,m} < \E[X] + 1) \geq \frac{1}{e}, $$
and we conclude by letting $i,m \to +\infty$. Again by translation invariance, the inequality then holds for all upper bounded log-concave random variables.

Finally, assume that $X$ is a log-concave random variable fully supported on $\Z$. Denote by $p$ the probability mass function of $X$. The approximation $X_n$, $n \geq 1$, with probability mass function
$$ p_n(k) = \frac{p(k)}{\sum_{j \leq n} p(j)}, \quad k \in\{\dots, -1,0,1, \dots, n\}, $$
yields the result by a similar argument as above, since for all $n \geq 1$, $X_n$ is upper bounded and $\E[X_n] \leq \E[X]$.
\end{proof}

\begin{remark}

Let us note that a similar construction as in Lemma \ref{approx}, combining truncation and scaling of probability mass function, may be used to show that given a random variable $X$ on $\Z$ with finite absolute moment, one may construct a sequence of compactly supported random variables $\{X_n\}$ such that $\E[X_n] = \E[X]$, $n \geq 1$, and $\{X_n\}$ converges to $X$ in total variation. 


\end{remark}

\begin{proof}[Proof of Theorem \ref{feige}]
First, by Lemma \ref{approx}, it is enough to prove inequality \eqref{feige-ineq} for compactly supported log-concave random variables, say on $\{M, \dots, N\}$, for arbitrary $M \leq N$. Next, according to Theorem \ref{extreme}, it is enough to consider log-affine distributions supported in $\{M, \dots, N\}$, that is, distributions of the form
$$ p(k) = C p^k, \quad k \in \{m \dots, n\}, $$
for all $p>0$ and $M \leq m \leq n \leq N$. Finally, if $X$ is log-affine on $\{m, \dots, n\}$, then $\widetilde{X} = X - m + 1$ is log-affine supported on $\{1, \dots, \widetilde{n}\}$, where $\widetilde{n} = n-m+1$, and
$$ \P(X \geq \E[X] + 1) = \P(\widetilde{X} \geq \E[\widetilde{X}] + 1). $$
Therefore, it is enough to prove the desired inequality for log-affine distributions supported on $\{1, \dots, n\}$, for arbitrary $n \geq 1$. In the following, we consider $X$ with probability mass function of the form
$$ p(k) = Cp^k, \quad k \in \{1, \dots, n\}, $$
for $n \geq 1$ and $p >0$, and we will establish inequality \eqref{feige-ineq} for such random variables. If $p = 1$, then
$$ \P(X < \E[X] + 1) \geq \frac{1}{2} \geq \frac{1}{e}. $$
When $p \neq 1$, we have
$$ C = \frac{1-p}{p(1-p^n)}. $$
Therefore,
\begin{eqnarray*} \E[X] = Cp \left[ \sum_{k=0}^n p^k \right]' = \frac{np^{n+1} - (n+1)p^{n} + 1}{(1-p)(1-p^n)}  = \frac{1}{1-p} - \frac{n p^n}{1-p^n}.
\end{eqnarray*}
On the other hand,
$$ \P(X < \E[X] + 1) = \sum_{k = 1}^{\lceil \E[X] \rceil} Cp^k = \frac{1 - p^{\lceil \E[X] \rceil}}{1 - p^n}. $$
Thus we are left to prove that
$$ \frac{1 - p^{\lceil \E[X] \rceil}}{1 - p^n} \geq \frac{1}{e}. $$
Since
$$ \frac{1 - p^{\lceil \E[X] \rceil}}{1 - p^n} \geq \frac{1 - p^{\E[X]}}{1 - p^n}, $$
it suffices to show that for all positive integer $n$ and every positive real number $p \neq 1$,
\begin{equation*}
    \frac{1 - p^{\frac{1}{1-p} - \frac{n p^n}{1-p^n}}}{1 - p^n} \geq \frac{1}{e}.
\end{equation*}
Substituting $x=p^n$, the above inequality is equivalent to
$$ \frac{1 - x^{\frac{1}{n(1-x^{1/n})} - \frac{x}{1-x}}}{1 - x} \geq \frac{1}{e}. $$
Note that the left hand side is non-increasing in $n$ since
$$ \frac{d}{dt} \left[ t(1-x^{1/t}) \right] = 1-x^{1/t} + x^{1/t} \log(x^{1/t}) \geq 0, $$
where the substitution $y = x^{1/t}$ makes the inequality obvious. Taking the limit in $n$, it suffices to prove that for $x \neq 1$,
\begin{equation*}\label{reduc}
    g(x) := \frac{1 - x^{-\frac{1}{\log(x)} - \frac{x}{1-x}}}{1 - x} \geq \frac{1}{e}.
\end{equation*}
Note that $\lim_{x \to +\infty} g(x) = \frac{1}{e}$, so the result would follow from
$$ 0 \geq g'(x) = \frac{e(x-1) - x^{\frac{x}{x-1}} \log(x)}{e(x-1)^3}. $$
To prove that $g'(x) \leq 0$, it suffices to prove that
$$ h(x) := \frac{x^{\frac{x}{x-1}} \log(x)}{e(x-1)} \geq 1. $$
Computing,
$$ h'(x) = \frac{x^{\frac{1}{x-1}} \left( (x-1)^2 - x\log(x)^2 \right)}{e(x-1)^3}, $$
the result will follow from $k(x) := (x-1)^2 - x\log(x)^2 \geq 0$ as this will give $h'(x) \leq 0$ for $x<1$ and $h'(x) \geq 0$ for $x>1$, so that $h(x) \geq h(1) = 1$. Since $k$ is convex, as $k''(x) = \frac{2}{x}(x-1-\log(x)) \geq 0$, and $k'(1) = 0$, we deduce that $k$ takes its minimum value $0$ when $x=1$, completing the proof.
\end{proof}

\section{Sharpness of Theorem \ref{feige}}\label{sharpness}

This section comments on the sharpness of Theorem \ref{feige}.

\begin{enumerate}
    \item For a discrete log-concave random variable $X$, one trivially has the extension to $t \geq 1$,
$$ \P(X \geq \E[X] + t) \leq 1 - \frac{1}{e}. $$
Let us see that these inequalities are sharp for any $t \geq 1$. Fix $n \geq 2$ and let $X$ be log-affine on $\{1, \dots, n\}$ with parameter $p = n^{\frac{1}{n}}$. Following the computation of the proof of Theorem \ref{feige}, one has
$$ \frac{n - n^{\frac{\E[X] + t - 1}{n}}}{n-1} \geq \P(X \geq \E[X] + t) \geq \frac{n - n^{\frac{\E[X] + t}{n}}}{n-1}. $$
Thus to prove that $1 - \frac{1}{e}$ is optimal among log-concave distributions it is enough to prove that for $\lambda \geq 0$,
$$ \lim_{n \to +\infty} \frac{n - n^{\frac{\E[X] + \lambda}{n}}}{n-1} = 1 - \frac{1}{e}. $$
The result follows since
$$ \frac{n}{n-1} - \frac{1}{n^{\frac{n+1}{n}} - n} = 1 - \frac{1}{\log(n)} + O \left( \frac{1}{n} \right), $$
therefore, using $\E[X] = \frac{n^2}{n-1} - \frac{1}{n^{1/n} - 1}$, we have
$$ \frac{n - n^{\frac{\E[X] + \lambda}{n}}}{n-1} = \frac{n}{n-1} \left( 1 - n^{\frac{\lambda}{n}} n^{-\frac{1}{\log(n)} + O \left( \frac{1}{n} \right)}  \right) \xrightarrow[n \to +\infty]{} 1 - \frac{1}{e}. $$

    \item Theorem \ref{feige} is sharp also within the subclass of ultra log-concave random variables. First, recall that a random variable $X$ is said to be ultra log-concave (ULC) if its probability mass function $p$ satisfies
    \begin{equation*}
    p(k)^2 \geq \frac{k+1}{k} p(k-1)\,p(k+1),
    \end{equation*} for all $k \in \mathbb{N} = \{1, 2, \dots\}$ and $X$ has contiguous support. Let
    $$ C_{LC} = \sup \{C : X \mbox{ log-concave } \Longrightarrow \P(X < \E[X] + 1) \geq C \}, $$
    and
    $$ C_{ULC} = \sup \{ C : X \mbox{ ultra log-concave } \Longrightarrow \P(X < \E[X] + 1) \geq C \}. $$
    We will show that
    $$ C_{ULC} = C_{LC}. $$
    Clearly, if $C$ gives a lower bound on $\P(X < \E[X] + 1)$ for every $X$ log-concave, it gives a lower bound if $X$ is restricted to the class of ultra log-concave variables, hence
    $$ C_{LC} \leq C_{ULC}. $$
    To argue in the reverse direction, 
    let $\widetilde{X}$ be the truncation of a Poisson random variable to the interval $\{m, \dots, m + n\}$ and with parameter $\lambda m$. Since $\widetilde{X}$ is the truncation of a ULC distribution, $\P( \widetilde{X} < \E[ \widetilde{X}] + 1) \geq C_{ULC}$. However, by translation invariance of the inequality, $X_m = \widetilde{X} - m$ also satisfies
    $$ \P(X_m < \E[X_m] + 1) \geq C_{ULC}. $$
    The probability mass function of $X_m$ can be written as
    $$ p_m(k) = K_m \frac{(\lambda m )^{m+k}}{(m+k)!} 1_{\{0, \dots,n\}}(k), $$
    where $K_m$ is the normalizing constant. Taking the limit in $m \to \infty$ of the ratios for $k \in \{0, \dots, n\}$,
    $$ \frac{p_m(k+1)}{p_m(k)} = \lambda \frac{m}{m+k+1} \longrightarrow \lambda. $$
    Thus as $m \to \infty$, $X_m$ converges in distribution to a log-affine distribution with parameter $\lambda$ on $\{0, \dots, n\}$. Thus for any compactly supported log-affine $X$,
    $$ \P(X \leq \E[X] + 1) \geq C_{ULC}, $$
    and hence by localization (see proof of Theorem \ref{feige})
    $$ \P(X \leq \E[X] + 1) \geq C_{ULC} $$
    holds for all log-concave distributions $X$, and we conclude that $C_{ULC} \leq C_{LC}$ by the first remark above on the sharpness of $\P(X < \E[X] + t)$, for all $t \geq 1$.
\end{enumerate}

\end{document}